\newtheorem{thm}{Theorem}
\newtheorem{lem}[thm]{Lemma}
\newtheorem{prop}[thm]{Proposition}
\theoremstyle{definition}
\newtheorem{defn}[thm]{Definition}
\newtheorem{rem}[thm]{Remark}
\newcommand{\Dc}{\mathcal{D}}
\DeclareMathOperator{\gr}{gr}
\newcommand{\braces}[1]{{\rm (}#1{\rm )}}
\renewcommand{\Re}{\operatorname{Re}}
\newcommand{\dom}{\operatorname{dom}}
\newcommand{\ran}{\operatorname{ran}}
\newcommand{\wt}{\widetilde}
\newcommand{\<}{\langle}
\renewcommand{\>}{\rangle}
\newcommand{\R}{\ensuremath{\mathbb R}}    
\newcommand{\C}{\ensuremath{\mathbb C}}    
\newcommand{\K}{\ensuremath{\mathbb K}}    
\newcommand{\calA}{\mathcal A}
\newcommand{\calD}{\mathcal D}
\newcommand{\calH}{\mathcal H}
\newcommand{\calL}{\mathcal L}         
\newcommand{\calM}{\mathcal M}
\newcommand{\calP}{\mathcal P}         
\newcommand{\calR}{\mathcal R}
\newcommand{\la}{\lambda}
\newcommand{\bmat}[4]
{
   \begin{bmatrix}
      #1 & #2\\
      #3 & #4
   \end{bmatrix}
}
\newcommand{\bvek}[2]
{
   \begin{bmatrix}
      #1\\
      #2
   \end{bmatrix}
}
\newcommand{\sbvek}[2]{\left[\begin{smallmatrix}#1\\#2\end{smallmatrix}\right]}
\newcommand{\sbmat}[4]{\left[\begin{smallmatrix}#1 & #2\\#3 & #4\end{smallmatrix}\right]}
\newcommand{\bmattt}[9]
{
   \begin{bmatrix}
      #1 & #2 & #3\\
      #4 & #5 & #6\\
      #7 & #8 & #9
   \end{bmatrix}
}
\newcommand{\bvekkk}[3]{\begin{bmatrix}#1\\#2\\#3\end{bmatrix}}
\newcommand{\bvekkkk}[4]{\begin{bmatrix}#1\\#2\\#3\\#4\end{bmatrix}}
\newcommand{\Llra}{\Longleftrightarrow}
\newcommand{\rank}{\operatorname{rank}}
\newcommand{\mul}{\operatorname{mul}}
\title{On the equivalence of geometric and descriptor representations of linear port-Hamiltonian systems\footnote{{\bf Funding:} HG acknowledges funding within the BMBF project EIZ - Project number 03SF0693A. FP was funded by the Carl Zeiss Foundation within the project \textit{DeepTurb--Deep Learning in and from Turbulence} and by the free state of Thuringia and the German Federal Ministry for Education and Research (BMBF) within the project \textit{THInKI--Th\"uringer Hochschulinitiative für KI im Studium}.}}
\author{H. Gernandt$^1$, F. Philipp$^2$, T. Preuster$^2$ and M. Schaller$^2$}
\date{$^1$Fraunhofer Research Institution for Energy Infrastructures and Geothermal Systems IEG Cottbus, Germany\\
$^2$Optimization-based Control Group, Institute for Mathematics, Technische Universit\"at Ilmenau, Germany\\[2ex]%
    \today}
\begin{document}
\maketitle

\begin{abstract}
\noindent We prove a one-to-one correspondence between the geometric formulation of port-Hamiltonian (pH) systems defined by Dirac structures, Lagrange structures, maximal resistive structures, and external ports and a state-space formulation by means of port-Hamiltonian descriptor systems, i.e., differential algebraic equations (DAE) with inputs and outputs.
\end{abstract}

\section{Introduction}
Development and operation of modern technologies requires the deep understanding and control of complex dynamical systems. The class of port-Hamiltonian (pH) systems represents such an elegant mathematical framework for modeling and analysis of multi-physics systems. Due to their inherent energy-based structure, these systems are very well-suited to describe the energy flows, energy conservation and interconnection of physical systems in a wide range of applications. From a modeling perspective, they offer the additional benefit of coupling capability. Port-Hamiltonian systems have found numerous applications in physical domains such as robotics, renewable energy systems, and mechatronics \cite{vdScJelt14,MehU22}.

From a mathematical point of view, there exist different approaches to pH  systems fertilized by different areas of mathematics and mathematical physics. On the one hand one can describe this class of systems by geometrical structures \cite{vdScJelt14}, leading to the concept of Dirac structures. 
Moreover the total energy of the system is given by the Hamiltonian density, which can be generalized by so-called Lagrangian subspaces.

In the language of system and control theory, pH systems can be characterized as descriptor systems with the physical structure of the system being inscribed in the algebraic properties of the coefficient matrices. This perspective allows the application of numerical methods as well as many results from simulation and solution theory, and interprets pH systems as open Hamiltonian systems interacting with their environment by means of inputs and outputs \cite{BeaMXZ18,MehU22}. For example, the pH structure implies certain restrictions on the Kronecker canonical form of the underlying matrix pencil \cite{MehlMehrWojt18} and also provides robustness of the eigenvalues under structured perturbations \cite{MehlMehrWojt20}.

Eventually there is also a functional analytical approach to pH systems theory. This point of view allows the extension of the description of energy-based physical systems on infinite-dimensional state spaces in terms of partial-differential equations and boundary control systems, cf.\ \cite{JacoZwar12} for one-dimensional state domains and \cite{skrepeklinear,PhilReis23} for recent approaches to higher-dimensional state domains.

In this paper, our aim is to reveal a connection between the geometric pH formulation by means of Lagrange structures, Dirac structures, and resistive structures and the system theoretic formulation in finite dimensions by means of input-state-output systems given as a differential-algebraic equation (DAE) of the form
\begin{align}\label{e:DAE_intro}
\begin{bmatrix}\tfrac{\mathrm{d}}{\mathrm{d}t}Ez(t)\\y(t)\end{bmatrix}
=
\begin{bmatrix}
J-R & B-P\\
(B+P)^* & S+N
\end{bmatrix}
\bvek{Qz(t)}{u(t)},\quad t\geq 0
\end{align}
with $\K^m$-valued input $u$ and output $y$, $\K^n$-valued state $z$, and matrices $E,J,R,Q\in\K^{n\times n}$, $B,P\in\K^{n\times m}$, $S,N\in\K^{m\times m}$ having additional structural properties, cf.\ Definition~\ref{d:mehrmann}.

In \cite{MascvdSc18} a first link between the geometric modeling of pH systems outlined in \cite{vdScJelt14} and the state-space representations for DAE-systems from \cite{BeaMXZ18} was established, where the authors only considered Lagrange and Dirac structures without any dissipation or external ports. This results in a state space system of the form
\begin{align}
    \label{eq:KPLS}
K\tfrac{d}{dt}Pz(t)=LSz(t), \quad t\geq 0,
\end{align}
where the matrices $K,L\in\K^{n\times n}$ are given by the kernel representation of the Dirac structure and $P,S\in\K^{n\times n}$ are given by a range representation of the Lagrange structure. 

The case with dissipation was considered in \cite{GernHR21} for pH descriptor systems. To this end, the Dirac structure was replaced by a dissipative subspace and for structural results on the underlying matrix pencils nonnegativity of the Lagrange structure was assumed.

The geometric setting in \cite{GernHR21} was further generalized in  \cite{mehrmann2023differential} where, contrary to the dissipative subspace in \cite{GernHR21}, in addition to the Dirac and Lagrange structures a resistive structure was used to model the dissipation. The relation to state space systems of the form \eqref{eq:KPLS} was studied and the index as well as the Kronecker canonical form of \eqref{eq:KPLS} was investigated. However no external port variables were considered. 

Recently, in \cite{van2022linear} a geometric description of dissipative pH descriptor systems including port variables was given. It was shown that the previously used geometric definition of pH systems, either via a separate resistive structure or via a dissipative structure (called {\em monotone} in \cite{van2022linear}) are in fact equivalent. This was used to obtain a state space formulation \eqref{e:DAE_intro} from the geometric description using a monotone structure. 

The main contribution of the present note is to also provide a converse result, i.e.\ for pH descriptor systems \eqref{e:DAE_intro} satisfying $\ker E\cap\ker Q=\{0\}$ we derive an equivalent geometric formulation. This extends previous results from \cite{MehrMora19}, where no additional Lagrange structure was considered, leading to a one-to-one correspondence of the two formulations in the behavioral sense. This means that for each solution of the geometric pH descriptor system there is a corresponding solution of the state-space formulation and vice versa. 

Furthermore, in comparison to \cite{van2022linear} we show that geometric pH systems have a state space formulation \eqref{e:DAE_intro}, where $Q$ equals the identity, which is often assumed in pH literature \cite{MehU22}. Incorporating the converse direction, it follows that each pH descriptor system is equivalent to another one in a possibly larger state-space with $Q = I$.

The paper is organized as follows: In Section~\ref{sec:prelim} we recall notations and well known facts from multi-valued linear algebra. After presenting both the geometric and the descriptor formulation of pH descriptor systems in Section~\ref{sec:defn}, the one-to-one correspondence between these two formulations is shown in Section~\ref{sec:equivalence}. We conclude the paper and discuss open problems in Section~\ref{sec:conclusion}.

\section{Preliminaries from multi-valued linear algebra}
\label{sec:prelim}
\noindent\textbf{Notation:} $\K$ denotes either $\C$ or $\R$---consistently throughout this article. The graph $\{(x,Ax) : x\in\K^n\}$ of a linear map $A : \K^n\to\K^m$ is denoted by $\gr A$. Its inverse (as a linear relation) is given and denoted by $\gr^{-1}\!A = \{(Ax,x) : x\in\K^n\}$. For $A\in\K^{n\times n}$ we write $A^*:=\overline{A}^\top$ where $\overline{A}$ is the entry-wise complex conjugate of $A$, i.e.\ if $A\in\R^{n\times n}$ we have $A^*=A^\top$. The Euclidean inner product in $\K^n$ will be denoted by $\langle x,y\rangle:=y^*x$ for all $x,y\in\K^n$ with the resulting Euclidean norm $\|x\|^2:=\langle x,x\rangle$.

Recall the notions of kernel, domain, multivalued part, and range of a linear subspace of a product space (also called \textit{linear relation}).

\begin{defn}\label{d: dirac_subsp}
The {\em kernel, domain, multivalued part,} and {\em range} of a linear subspace $\calA\subset\K^n\times\K^m$ are defined by
\begin{align*}
\ker\calA &:= \{f\in\K^n : (f,0)\in\calA\,\},\\
\dom\calA &:= \{f\in\K^n : \exists\, e\in\K^m\text{ s.t. }(f,e)\in\calA\,\},\\
\mul\calA &:= \{e\in\K^m : (0,e)\in\calA\,\},\\
\ran\calA &:= \{e\in\K^m : \exists\, f\in\K^n\text{ s.t. }(f,e)\in\calA\,\},
\end{align*}
respectively. The \emph{inverse} $\calA^{-1}$,  the {\em adjoint} $\calA^*$, and \emph{scalar multiples} $\alpha \calA$ of $\calA$ are defined by 
\begin{align*}
\calA^{-1} &:= \{(e,f) : (f,e)\in\calA\},\\
\qquad
\calA^* &:= \left\{(e',f') : \<f',e\> = \<e',f\>\;\forall (e,f)\in\calA\right\},\\
\alpha\calA &:= \{(e,\alpha f) : (e,f)\in\calA\},\quad \alpha\in\K.
\end{align*}
\end{defn}

In particular, non-invertible matrices $A$ can be inverted in the sense of linear relations by considering $\gr^{-1}A$ which might then be multi-valued if $A$ is not injective or not-everywhere defined if $A$ is not surjective.

In the following, we collect some notions for subspaces which have additional structural properties.
\begin{defn}
\label{def:subspace_properties}
Let $\calD$, $\calL$, $\calM$, and $\calR$ be subspaces of $\K^{2n}$.
\begin{itemize}
    \item[\rm (i)] $\calL$ is called a \emph{Lagrange structure} if $\calL = \calL^*$.
    \item[\rm (ii)]  $\calD$ is called a \emph{Dirac structure} if $\calD = -\calD^*$.
  \item[\rm (iii)]  $\calR$ is called a \emph{(maximal) resistive structure} if $\calR\subset\calR^*$,
   \[
\<e,f\> \leq  0\quad \text{for all $\sbvek ef\in\calR$\quad  (and $\dim \calR=n$).} 
 \]
 \item[\rm (iv)]  $\calM$ is called a \emph{(maximal) monotone structure} if
 \[
\Re\<e,f\> \geq  0\quad \text{for all $\sbvek ef\in\calM$\quad  (and $\dim \calM=n$);} 
 \]

 \end{itemize}
\end{defn}

\begin{rem}
{\bf (a)} In the language of linear relations slightly different nomenclature is used. Dirac, Lagrange and (maximal) monotone structures are called skew-adjoint, self-adjoint and (maximal) accretive, respectively, cf.\ \cite{BehrHdS20}. A resistive structure would be called a non-positive symmetric relation.

\smallskip\noindent
{\bf (b)} $\calL$ is a Lagrange structure if and only if $\calL\subset\calL^*$ and $\dim\calL = n$. In particular, a maximal resistive structure is also a Lagrange structure. Similarly, $\calD$ is a Dirac structure if and only if $\calD\subset -\calD^*$ and $\dim\calD = n$.
\end{rem}

\begin{rem}
In the case $\K = \C$, resistive structures always have maximal resistive extensions. This follows directly from \cite[Theorem 5.3.1]{BehrHdS20}. Similarly, monotone structures always have maximal monotone extensions. Indeed, if $\calM$ is a monotone structure in $\C^{2n}$, then its Cayley transform is a linear contraction $V : \dom V\to\C^n$ with $\dom V\subset\C^n$, cf.\ \cite[Proposition 1.6.6]{BehrHdS20}. Let $\tilde V : \C^n\to\C^n$ be a contractive extension of $V$. Then the inverse Cayley transform of $\tilde V$ is an extension as desired.
\end{rem}

There are two common representations which will be referred to in this article as \emph{kernel} and \emph{image representation}, see \cite[Theorem 3.3]{BerWT16} and also \cite{BehrHdS20}.
\begin{prop}
Let $\mathcal{M}$ be a subspace of $\K^{2n}$ of dimension $d$. Then there exists matrices $K,L\in\K^{(2n-d)\times n}$ and $F,G\in\K^{n\times d}$ such that the following holds
\begin{align}
\label{eq:ker_ran}    
\calM=\ker[K,L]=\ran\begin{bmatrix}
    F\\G
\end{bmatrix}.
\end{align}
\end{prop}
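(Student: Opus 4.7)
The plan is to treat the two representations separately, since both are standard consequences of the dimension/rank theorem applied to $\mathcal{M}$ viewed as a subspace of $\K^{2n}$.

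For the image (range) representation, I would choose any basis $v_1,\dots,v_d$ of $\calM$ and split each basis vector as $v_i=\bvek{f_i}{g_i}$ with $f_i,g_i\in\K^n$. Assembling the $f_i$ as the columns of a matrix $F\in\K^{n\times d}$ and the $g_i$ as the columns of $G\in\K^{n\times d}$ yields
\begin{align*}
\ran\begin{bmatrix}F\\G\end{bmatrix}=\linspan\{v_1,\dots,v_d\}=\calM,
\end{align*}
as required.

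For the kernel representation, I would use that $\calM$, being a $d$-dimensional subspace of $\K^{2n}$, is the kernel of some surjective linear map $\K^{2n}\to\K^{2n-d}$. Concretely, pick a basis of $\calM^\perp$ (with respect to the Euclidean inner product) and arrange its vectors as the rows of a matrix $M\in\K^{(2n-d)\times 2n}$ of full row rank $2n-d$; then $\ker M=(\calM^\perp)^\perp=\calM$. Partitioning $M=[K,L]$ with $K,L\in\K^{(2n-d)\times n}$ gives $\calM=\ker[K,L]$.

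There is no real obstacle here: the construction is entirely elementary, and the only point to verify is the dimension count (equivalently, that $M$ has full row rank $2n-d$, and that $\begin{bmatrix}F\\G\end{bmatrix}$ has full column rank $d$), which follows directly from the choice of bases. The statement should be viewed simply as fixing notation and recording the two dual ways of describing a subspace, both of which will be used later when Dirac, Lagrange, resistive, and monotone structures are concretely parametrised by matrices.
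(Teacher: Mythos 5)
Your argument is correct and is the standard one; the paper itself gives no proof of this proposition but merely cites the literature (Theorem 3.3 of the reference BerWT16), so there is nothing to diverge from. The only cosmetic point: since $\K$ may be $\C$ and the paper uses the inner product $\langle x,y\rangle = y^*x$, the rows of $M=[K,L]$ should be the \emph{conjugate} transposes of your chosen basis vectors of $\calM^\perp$ (equivalently, $M=W^*$ with $W$ having those vectors as columns), so that $Mx=0$ is exactly the condition $x\perp\calM^\perp$; alternatively, one can avoid orthogonality altogether by taking any full-row-rank $M\in\K^{(2n-d)\times 2n}$ annihilating $\calM$ and comparing dimensions.
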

If $\calD=\ker[K,L]$ is a Dirac structure for some $K,L\in \K^{n\times n}$, then \cite{MascvdSc18} used the notion \emph{Dirac algebraic constraint} if $K$ is not invertible. This is equivalent to the existence of $(z,0)\in\Dc$ with $z\neq 0$, or in the language of linear relations $\ker \Dc\neq \{0\}$. A special case of such constraints are kinematic constraints (see Example 2.7 in \cite{MascvdSc18}). Analogously, for a Lagrange structure $\calL=\ran\sbvek PS$ for some  $P,S\in\K^{n\times n}$ there are said to be \emph{Lagrange algebraic constraints} if $P$ is not invertible. These can be used to model algebraic state constraints. 

The matrices in the kernel and range representations \eqref{eq:ker_ran} can be used to characterize the structural properties from Definition~\ref{def:subspace_properties}. In the next proposition, we restrict ourselves to the range representation.

\begin{prop}
\label{prop:ran_properties}
Let $\calM$ be a subspace of $\K^{2n}$ which is given by $\calM=\ran\sbvek PS$ for some $P,S\in\K^{n\times m}$. Then the following equivalences hold:
\begin{itemize}
    \item[\rm (i)] $\calM=\ran\sbvek PS$ is a Lagrange structure if and only if $S^*P=P^*S$ and $\rank\sbvek PS=n$;
    \item[\rm (ii)] $\calM=\ran\sbvek PS$ is a Dirac structure if and only if $S^*P=-P^*S$ and $\rank\sbvek PS=n$;
    \item[\rm (iii)] $\calM=\ran\sbvek PS$ is (maximal) monotone if and only if $S^*P+P^*S\geq 0$ (and $\rank\sbvek PS=n$);
    \item[\rm (iv)] $\calM=\ran\sbvek PS$ is (maximal) resistive if and only if $S^*P=P^*S\leq 0$ (and $\rank\sbvek PS=n$).
\end{itemize}
\end{prop}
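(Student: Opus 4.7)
The plan is to derive the adjoint $\calM^*$ explicitly from the range representation $\calM=\ran\sbvek{P}{S}$ and then match the four structural conditions from Definition~\ref{def:subspace_properties} against the resulting matrix identities on $P$ and $S$.

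First I would establish the workhorse identity: a pair $(e',f')\in\K^{2n}$ lies in $\calM^*$ if and only if $\<f',Px\>=\<e',Sx\>$ for every $x\in\K^m$, which, using $\<u,v\>=v^*u$, collapses to the single matrix equation $P^*f'=S^*e'$. Substituting $(e',f')=(Py,Sy)\in\calM$ yields $(Py,Sy)\in\calM^*$ iff $(P^*S-S^*P)y=0$, and $(Py,Sy)\in-\calM^*$ iff $(P^*S+S^*P)y=0$. Consequently, the one-sided inclusions $\calM\subset\calM^*$ and $\calM\subset-\calM^*$ are equivalent to $S^*P=P^*S$ and $S^*P=-P^*S$, respectively. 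Parts (i) and (ii) then follow directly via Remark (b): a subspace is a Lagrange (resp.\ Dirac) structure iff it is contained in its (resp.\ negative) adjoint and has dimension $n$, and the dimension condition is exactly $\rank\sbvek{P}{S}=n$ since $\dim\calM=\rank\sbvek{P}{S}$.

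For (iii) and (iv), I would evaluate $\<e,f\>$ on a generic element $\sbvek{Px}{Sx}\in\calM$, obtaining $\<Px,Sx\>=x^*S^*Px$. The monotonicity requirement $\Re\<e,f\>\geq 0$ thus translates to $\tfrac12 x^*(S^*P+P^*S)x\geq 0$ for all $x\in\K^m$, i.e.\ $S^*P+P^*S\geq 0$; maximality is added by the rank condition $\rank\sbvek{P}{S}=n$. For the resistive case, the symmetry $\calR\subset\calR^*$ is handled as in (i) giving $S^*P=P^*S$, while the sign condition $\<e,f\>\leq 0$ becomes $x^*S^*Px\leq 0$ for all $x$; together with the already established Hermitian symmetry of $S^*P$, this is equivalent to $S^*P\leq 0$, and maximality again reduces to the rank condition.

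The computations are essentially bookkeeping, and I do not expect any genuine obstacle. The only point requiring care is the passage from the one-sided inclusions $\calM\subset\pm\calM^*$ to equality in parts (i) and (ii); this is absorbed cleanly by the standard dimension formula $\dim\calM+\dim\calM^*=2n$ already invoked in Remark (b). A secondary subtlety is that over $\C$ the Hermitian symmetry of $S^*P$ in (iv) would in fact follow from $x^*S^*Px\in\R$ alone, whereas over $\R$ the explicit inclusion $\calR\subset\calR^*$ is indispensable; separating the two conditions as in the statement keeps the argument uniform in $\K$.
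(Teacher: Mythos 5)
Your proof is correct. Note that the paper states Proposition~\ref{prop:ran_properties} without proof (treating it as a standard fact), so there is nothing to compare against; your argument --- computing $\calM^*$ via the identity $P^*f'=S^*e'$, reducing the inclusions $\calM\subset\pm\calM^*$ to the matrix identities on $S^*P$, and handling maximality through $\dim\calM=\rank\sbvek PS=n$ together with $\dim\calM+\dim\calM^*=2n$ --- is exactly the standard derivation one would supply, and your remark distinguishing the roles of the symmetry and sign conditions in (iv) over $\R$ versus $\C$ is a correct and worthwhile observation.
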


\begin{rem}
Similar characterizations as in Proposition \ref{prop:ran_properties} can also be derived for subspaces $\calM$ given in kernel representations  $\calM=\ker[K,L]$ for some $K,L\in\K^{(2n-d)\times n}$. Then the adjoint relation $\calM^*$ is given in range representation 
\[
\mathcal{M}^*=\ran\sbvek{L^*}{-K^*}.
\]
Furthermore, $\calM$ is Lagrange (resp.\ Dirac, maximal monotone, maximal resistive) if and only if $\calM^*$ has this property. Indeed, Lagrange and maximal resistive structures satisfy $\calM=\calM^*$, Dirac structures satisfy $\calM=-\calM^*$, see e.g.\ \cite{GernHR21} and it was shown in  \cite[Proposition 1.6.7]{BehrHdS20}
that $\calM$ is maximal monotone if and only if $\calM^*$ is maximal monotone.

Therefore, can apply Proposition~\ref{prop:ran_properties} to $\calM^*$ which implies that $\calM=\ker[K,L]$ is Lagrange (resp.\ Dirac, maximal monotone, maximal resistive) if and only if ($KL^*=LK^*$, $KL^*=-LK^*$, $KL^*+LK^*\leq 0$, $KL^*=LK^*\ge 0$).
\end{rem}

The following result is the key to rewrite maximal subspaces as graphs of matrices in a larger subspace.

\begin{prop}\label{p:extended}
Let $\calM\subset\K^N\times\K^N$ be a Dirac (resp.\ Lagrange, maximal resistive, maximal monotone) structure and let $l := \dim\ker\calM$. Then there exist matrices $G\in\K^{N\times l}$ with $\ker G = \{0\}$ and $M\in\K^{N\times N}$ satisfying 
$M=-M^*$ (resp.\ $M=M^*$, $M=M^*\leq 0$, $M+M^*\geq 0$) and $M\ker G^*\subset\ker G^*$ such that
\begin{align}
\label{eq:M_representation}
\calM = \left\{\bvek{Me-G\la}{e} : G^*e=0,\,\la\in\K^l\right\}.
\end{align}
\end{prop}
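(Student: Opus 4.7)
My approach is to construct $(G,M)$ directly from $\calM$. The key preliminary is the orthogonality
\[
\ran\calM = (\ker\calM)^\perp,
\]
which I verify in all four cases. For Dirac and Lagrange structures this is immediate from $\calM = \mp\calM^*$ together with the general identity $\ker\calM^* = (\ran\calM)^\perp$. For maximal resistive and maximal monotone $\calM$, I use a linear-perturbation argument: for $(x_0,0)\in\calM$ and any $(y,z)\in\calM$, linearity forces the line $(x_0+\alpha y,\,\alpha z)$ to lie in $\calM$ for every $\alpha\in\K$, and plugging this into the defining sign constraint yields an expression of the form $c\bar\alpha+d|\alpha|^2$ that must be non-positive (resp.\ have non-negative real part) for all $\alpha\in\K$. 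Since the quadratic term is controlled but the linear one is not unless its coefficient vanishes, one forces $z^*x_0=0$, whence $\ker\calM\subseteq(\ran\calM)^\perp$. Equality then follows by a dimension count, using $\dim\calM = N = \dim\ker\calM+\dim\ran\calM$.

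Given this identity, pick any injective $G\in\K^{N\times l}$ with $\ran G = \ker\calM$; then $\ker G^* = (\ran G)^\perp = \ran\calM$. For each $e\in\ran\calM$ the set $\{f:(f,e)\in\calM\}$ is an affine coset of $\ker\calM = \ran G$, hence meets $(\ran G)^\perp = \ker G^*$ in exactly one point, which I denote $M_0e$. This defines a linear map $M_0:\ker G^*\to\ker G^*$, which I extend by zero on $\ran G$ to obtain $M\in\K^{N\times N}$, block diagonal with respect to the orthogonal decomposition $\K^N = \ker G^*\oplus\ran G$; in particular $M\ker G^*\subseteq\ker G^*$. The representation is then immediate: if $(f,e)\in\calM$ then $e\in\ker G^*$ and $f - Me\in\ker\calM = \ran G$ gives $f = Me - G\lambda$ for a unique $\lambda\in\K^l$; conversely, $(Me,e)\in\calM$ by construction of $M_0$ and $(-G\lambda,0)\in\calM$ since $-G\lambda\in\ker\calM$, so adding yields $(Me - G\lambda,e)\in\calM$.

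It remains to verify the structural property of $M$ in each case. Plugging two generic elements $(Me_i - G\lambda_i, e_i)\in\calM$ into the defining identity or inequality of the structure and using $G^*e_i = 0$ to cancel every $G\lambda_i$-contribution, the condition reduces on $\ker G^*$ to
\[
\langle e_2,Me_1\rangle = \pm\langle Me_2,e_1\rangle, \qquad e^*Me\leq 0, \qquad \Re(e^*Me)\geq 0,
\]
in the Lagrange/Dirac, maximal resistive, and maximal monotone cases respectively. Because the extension is block diagonal with zero on $\ran G$, each such condition on $\ker G^*$ extends verbatim to all of $\K^N$, yielding $M = M^*$ (Lagrange), $M = -M^*$ (Dirac), $M = M^*\leq 0$ (resistive), and $M+M^*\geq 0$ (monotone).

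The main obstacle in this plan is the orthogonality identity $\ran\calM = (\ker\calM)^\perp$ in the resistive and monotone cases; once the linear-perturbation argument is in hand, the rest of the construction is the canonical choice of representative in an affine coset together with a block-diagonal extension by zero, both of which preserve the relevant algebraic structure automatically.
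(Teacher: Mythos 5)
Your proof is correct, but it cannot be compared line-by-line with the paper's, because the paper does not actually prove Proposition~\ref{p:extended}: it only cites three external references, one for the Dirac case, one for the Lagrange case, and one for the resistive/monotone cases. What you have produced is a unified, self-contained argument covering all four cases at once, which is a genuine improvement in presentation. The hinge of your argument, $\ran\calM=(\ker\calM)^\perp$, is correctly established: for Dirac and Lagrange structures it is indeed immediate from $\ker\calM^*=(\ran\calM)^\perp$ together with $\calM=\mp\calM^*$ (and the maximal resistive case could be subsumed here, since by the paper's own Remark a maximal resistive structure is a Lagrange structure), while your perturbation argument $\bar\alpha c+|\alpha|^2 d$ is genuinely needed only for the maximal monotone case, where no adjoint identity is available; there the two-sided limit in $\alpha=t$ and (over $\C$) $\alpha=it$ correctly kills both real and imaginary parts of $c=z^*x_0$, and the dimension count $\dim\calM=\dim\ker\calM+\dim\ran\calM=N$ upgrades the inclusion to equality. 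The remainder --- choosing the canonical representative of each fiber in $(\ran G)^\perp=\ker G^*$, checking linearity of $M_0$, extending by zero on $\ran G$, and transporting the structural identity or inequality from the compression to all of $\K^N$ via $M=PMP$ --- is sound and is essentially the mechanism underlying the cited representation results in the literature. One small expository point: since your $M$ satisfies $M=0$ on $\ran G$ in addition to $M\ker G^*\subset\ker G^*$, you prove slightly more than the proposition asserts, which is harmless.
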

\begin{proof}
The claim has been proven for Dirac structures in \cite[Proposition 3.8]{van2022linear} (see also \cite[Theorem 3.1]{dalsmo1998representations}),  for Lagrange structures in \cite[Proposition 5.3]{MascvdSc18} and for maximal resistive and monotone subspaces in \cite{GernHR21}.
\end{proof}
We conclude this section with a remark.

\begin{rem}\hfill
\begin{itemize}
\item[\rm (i)] If $\sbvek xy\in\calM$, where $\calM$ is as in \eqref{eq:M_representation}, then both $e$ and $\la$ are uniquely determined: $e = y$ and $\la = G^\dagger(My - x)$, where $G^\dagger$ is any left-inverse of $G$.
\item[\rm (ii)] Since the inverse relation $\calM$ of Dirac, Lagrange, maximal resistive and maximal monotone structures inherits the particular property, we can apply Proposition~\ref{p:extended} to $\calM^{-1}$ and obtain the existence of $\hat M$ and an injective $\hat G$ such that
\[
\calM = \left\{\bvek{e}{\hat Me-\hat G\la} : \hat G^*e=0,\,\la\in\K^{\hat l}\right\}.
\]
\end{itemize}
\end{rem}

\section{Two formulations of pH systems}\label{sec:defn}
In this part we introduce the two formulations of pH systems we will consider in the remainder of this work. In the upcoming Subsection~\ref{subsec:geom} we introduce the geometric representation, whereas in Subsection~\ref{subsec:dae} we recall the formulation by means of a differential algebraic descriptor system.
\subsection{Geometric representation of pH systems}\label{subsec:geom}
The following geometric description of pH systems was recently introduced in \cite{van2022linear} and extends the geometric formulation from \cite{MascvdSc18} by incorporating resistive variables, inputs and outputs.

\begin{defn}[{\bf \cite{van2022linear}}]\label{def:implicit_PH}
A {\em geometric representation} of a pH system \braces{in short: a {\em geometric pH system}} with state space $\K^n$ and external dimension $m$ is given by a triple $(\calD,\calL,\calR)$ consisting of
\begin{itemize}
\item a Dirac structure $\Dc\subseteq\K^{n+r+m}\times\K^{n+r+m}$,
\item a Lagrange structure $\calL\subset\K^n\times\K^n$, and
\item a maximal resistive structure $\calR\subset\K^r\times\K^r$.
\end{itemize}
By a {\it solution} of the pH system $(\calD,\calL,\calR)$ we understand an input-state-output trajectory $(u,x,y) \in C([0,\infty);\K^m)\times C^1([0,\infty);\K^n)\times C([0,\infty);\K^m)$ for which there exist continuous functions $f_R$, $e_R$, and $e_L$ such that for all $t\ge 0$ we have
\begin{equation}\label{e:solution_implicit}
\big(-\dot x(t),f_R(t),y(t),e_L(t),e_R(t),u(t)\big)\in\mathcal{D},\quad (x(t),e_L(t))\in\mathcal L,\quad  (f_R(t),e_R(t))\in\mathcal{R}.
\end{equation}
The functions $f_R$ and $e_R$ are called the {\em resistive flow and effort variables}, respectively, and $e_L$ is the {\em Lagrangian effort}.
\end{defn}

We briefly comment on this definition in view of previous works and generalizations.
\begin{rem}
In a more general setting, the  structures in Definition~\ref{def:implicit_PH} might also depend on time $t$ and state $x$, cf.\ \cite{vdScJelt14}. However, here we only consider stationary structures. Furthermore, the variables $u$ and $y$, usually denoting inputs and outputs in systems and control theory frameworks, were called $f_P$ and $e_P$ in \cite{van2022linear}, respectively. Last, we note that in, e.g., \cite[Definition 2.1]{van2022linear} or in \cite[Defintion 14]{mehrmann2023differential}, negated maximal resistive structures are called non-negative Lagrange structures. To avoid confusion with the Lagrange structure $\calL$, we will utilize the notion maximal resistive structure for $\calR$.
\end{rem}

Solutions of geometric pH systems obey a {\em power-balance}, as the following elementary result shows.

\begin{lem}
Let $(u,x,y) \in C([0,\infty);\K^m)\times C^1([0,\infty);\K^n)\times C([0,\infty);\K^m)$ be a solution of the geometric pH system $(\calD,\calL,\calR)$. Then, for all $t\geq 0$, the following power balance holds:
\begin{align*}
\Re\<\dot x(t),e_L(t)\> = \Re\<f_R(t),e_R(t)\> + \Re\<y(t),u(t)\> \leq \Re\<y(t),u(t)\>.
\end{align*}
\end{lem}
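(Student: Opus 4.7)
The plan is to extract the equality from the self-orthogonality of the Dirac structure $\calD$ applied to the single inclusion asserted in \eqref{e:solution_implicit}, and then to derive the inequality from the sign condition defining $\calR$. The Lagrange structure $\calL$ plays no role in the power balance itself.

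First I would record the key property of Dirac structures: if $\calD\subset V\times V$ satisfies $\calD=-\calD^*$, then $\Re\<e,f\>=0$ for every $(f,e)\in\calD$. This is a direct consequence of the definition (pair an element with itself), or equivalently of Proposition~\ref{prop:ran_properties}(ii): writing $(f,e)=\sbvek{P}{S}v$ in an image representation gives $\<e,f\>=v^*S^*Pv$, and the condition $S^*P=-P^*S$ forces this scalar to be purely imaginary. Applying this with $V=\K^{n+r+m}$ and the block decomposition coming from \eqref{e:solution_implicit}, the Euclidean inner product on $V$ splits additively over the three blocks and yields
\begin{equation*}
\Re\<e_L(t),-\dot x(t)\>+\Re\<e_R(t),f_R(t)\>+\Re\<u(t),y(t)\>=0.
\end{equation*}
Using the Hermitian symmetry $\Re\<a,b\>=\Re\<b,a\>$ and rearranging gives the equality part of the claim.

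For the inequality, since $(f_R(t),e_R(t))\in\calR$ with $\calR$ maximal resistive, the defining property yields $\<e_R(t),f_R(t)\>\le 0$; in particular this scalar is real, hence $\Re\<f_R(t),e_R(t)\>=\<e_R(t),f_R(t)\>\le 0$, and the upper bound follows at once. There is no substantive obstacle in the argument: the lemma is a pointwise identity obtained by unfolding the definitions of Dirac and resistive structures, and neither continuity nor differentiability of the trajectories enters the proof.
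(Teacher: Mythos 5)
Your proof is correct and follows essentially the same route as the paper: the equality comes from pairing the element $\bigl((-\dot x,f_R,y),(e_L,e_R,u)\bigr)\in\calD$ with itself and using $\calD=-\calD^*$ to conclude that the real part of the inner product vanishes, and the inequality comes from the sign condition $\Re\<f_R,e_R\>\le 0$ defining the resistive structure $\calR$. Your additional observations (that $\calL$ plays no role and that no regularity of the trajectories is needed) are accurate, and the minor detour through the image representation is unnecessary but harmless.
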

\begin{proof}
Since $\calD$ is a Dirac structure as defined in Definition~\ref{def:subspace_properties}(ii), and due to \eqref{e:solution_implicit}, we compute for all $t\geq 0$
\begin{align}
\label{eq:implicit_PBE}
\begin{split}
0=\Re \left\langle
\begin{bmatrix}
    -\dot x(t)\\ f_R(t)\\ y(t)
\end{bmatrix},\begin{bmatrix}
   e_L(t)\\ e_R(t)\\ u(t)
\end{bmatrix}\right\rangle
&= -\Re\<\dot x(t),e_L(t)\> + \Re\<f_R(t),e_R(t)\> + \Re\<y(t),u(t)\>.
\end{split}
\end{align}
We have $\Re\<f_R(t),e_R(t)\>\le 0$ as $(f_R(t),e_R(t))\in\calR$ and $\calR$ is resistive, cf.\ Definition~\ref{def:subspace_properties}(iii).
\end{proof}

\subsection{Port-Hamiltonian descriptor systems}\label{subsec:dae}
A second formulation of linear pH systems is given in a somewhat more explicit form, involving a differential-algebraic equation (DAE), see e.g.\ \cite[Definition 4.9]{MehU22}, see also \cite{BeaMXZ18,MehrMora19}.

\begin{defn}[{\bf \cite{MehU22}}]\label{d:mehrmann}
A pH descriptor system is a DAE with inputs and outputs of the form
\begin{align}\label{e:DAE}
\begin{bmatrix}\tfrac{\mathrm{d}}{\mathrm{d}t}Ez(t)\\y(t)\end{bmatrix}
=
\begin{bmatrix}
J-R & B-P\\
(B+P)^* & S+N
\end{bmatrix}
\bvek{Qz(t)}{u(t)}
\end{align}
with $\K^m$-valued input $u$ and output $y$, $\K^n$-valued state $z$, matrices $E,J,R,Q\in\K^{n\times n}$, $B,P\in\K^{n\times m}$, $S,N\in\K^{m\times m}$ satisfying
$$
E^*Q = Q^*E,\quad J=-J^*,\quad N=-N^*,\quad R=R^*,\quad S=S^*
$$
such that
\begin{align}\label{e:Wpositive}
W:=\begin{bmatrix}
    Q^*&0\\0&I
\end{bmatrix}\begin{bmatrix} R&P\\ P^* &S\end{bmatrix}\begin{bmatrix}
    Q&0\\0&I
\end{bmatrix}\geq 0.
\end{align}
The {\em Hamiltonian} of the system is defined as $H(z)=z^*Q^*Ez$. A solution of \eqref{e:DAE} is an input-state-output trajectory $(u,z,y)\in C([0,\infty);\K^{n+2m})$ with $Ez\in C^1([0,\infty);\K^n)$ such that \eqref{e:DAE} is satisfied for all $t\geq 0$.
\end{defn}
Note that one could also generalize the above definition to inputs $u\in L_{\mathrm loc}^1((0,\infty);\K^m)$ when considering $Ez \in W^{1,1}_{\mathrm{loc}}([0,\infty);\K^n)$.

The following result yields a regularity result of the Hamiltonian along solutions and power balance for the DAE system \eqref{e:DAE}. Its proof follows by straightforward modifications of \cite[Lemma 2.2]{FaulMasc22}, where a similar result was shown for solutions in $W^{1,1}_{\rm loc}([0,\infty);\K^m)$ and we state it here for completeness.

\begin{lem}\label{lem:hamiltonian}
If $(u,z,y)$ is a solution of \eqref{e:DAE}, then $H\circ z\in C^{1}([0,\infty);\K^n)$, and the following power balance holds:
\begin{align}
\label{eq:energybalance}
\frac{\mathrm{d}}{\mathrm{d}t} H(z(t)) = \Re\big[u(t)^* y(t)\big] - \left\|W^{\frac12}\begin{bmatrix}
    z(t)\\u(t)
\end{bmatrix}\right\|^2.
\end{align}
\end{lem}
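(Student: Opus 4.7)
The plan is to first establish that $t\mapsto H(z(t))$ is $C^1$ while simultaneously obtaining a difference-quotient formula for its derivative, and then to substitute the dynamics \eqref{e:DAE} and collect terms algebraically.

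The main obstacle is the regularity step: the state $z$ is only continuous, while by hypothesis only the product $Ez$ is $C^1$, so one cannot directly differentiate the quadratic $H(z(t))=z(t)^*Q^*Ez(t)$ by applying the chain rule to $z$. To circumvent this, I would exploit the compatibility $Q^*E=E^*Q$. Setting $p(t):=Ez(t)\in C^1([0,\infty);\K^n)$ so that $H(z(t))=z(t)^*Q^*p(t)$, I would split
\begin{align*}
H(z(t+h))-H(z(t))=z(t+h)^*Q^*\bigl(p(t+h)-p(t)\bigr)+\bigl(z(t+h)-z(t)\bigr)^*Q^*p(t),
\end{align*}
and rewrite the second summand via $Q^*p(t)=Q^*Ez(t)=E^*Qz(t)$ as $\bigl(E(z(t+h)-z(t))\bigr)^*Qz(t)=\bigl(p(t+h)-p(t)\bigr)^*Qz(t)$. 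Both pieces then depend linearly on the $C^1$ quantity $p(t+h)-p(t)$ with continuous coefficients $Qz(t+h)$ and $Qz(t)$, so dividing by $h$ and letting $h\to 0$ yields simultaneously the $C^1$ regularity of $H\circ z$ and the identity
\begin{align*}
\tfrac{\mathrm d}{\mathrm dt}H(z(t))=2\,\Re\<\tfrac{\mathrm d}{\mathrm dt}(Ez)(t),\,Qz(t)\>.
\end{align*}

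With this identity in hand, the power balance reduces to routine matrix algebra. I would substitute $\tfrac{\mathrm d}{\mathrm dt}Ez=(J-R)Qz+(B-P)u$ from \eqref{e:DAE} and use $J=-J^*$ (whence $(Qz)^*J(Qz)$ is purely imaginary) together with $R=R^*$, to reduce the derivative to a combination of $(Qz)^*R(Qz)$ and $\Re[(Qz)^*(B-P)u]$. Independently, from $y=(B+P)^*Qz+(S+N)u$ together with $N=-N^*$ and $S=S^*$ (so $u^*Nu\in i\R$), I would expand
\begin{align*}
\Re[u^*y]=\Re[(Qz)^*(B+P)u]+u^*Su,\qquad \left\|W^{1/2}\bvek zu\right\|^2=(Qz)^*R(Qz)+2\Re[(Qz)^*Pu]+u^*Su.
\end{align*}
In the difference $\Re[u^*y]-\|W^{1/2}\bvek zu\|^2$ the $u^*Su$ contributions cancel and the mixed $P$-terms recombine into $\Re[(Qz)^*(B-P)u]-(Qz)^*R(Qz)$, which matches the derivative computed above and yields the claimed identity. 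I would cite \cite[Lemma 2.2]{FaulMasc22} merely as a template, since the $C^1$ setting here is strictly simpler than the $W^{1,1}_{\rm loc}$ version treated there, and the only genuine modification needed is the symmetry-based derivative argument in the regularity step.
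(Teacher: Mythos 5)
Your proposal is correct in substance but takes a genuinely different route for the delicate regularity step. The paper introduces the orthogonal projection $\mathcal P=E^\dagger E$ onto $\ran E^*$, notes that $Ez\in C^1$ iff $\mathcal Pz\in C^1$, and uses $E^*Q=Q^*E$ to rewrite $H(z)$ as a quadratic form in the $C^1$ quantity $\mathcal Pz$, after which the chain rule applies. You instead work directly with difference quotients: the splitting of $H(z(t+h))-H(z(t))$ into two terms, each linear in $p(t+h)-p(t)=E(z(t+h)-z(t))$ with merely continuous coefficients $Qz(t+h)$ and $Qz(t)$ (the second term requiring exactly the symmetry $Q^*E=E^*Q$ to pull the increment of $z$ through $E$), is a clean and more elementary argument that avoids the Moore--Penrose inverse altogether and arrives at the same intermediate identity $\tfrac{\mathrm d}{\mathrm dt}H(z)=c\,\Re\langle\tfrac{\mathrm d}{\mathrm dt}Ez,Qz\rangle$. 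The subsequent algebra --- substituting the dynamics, discarding the purely imaginary terms $(Qz)^*J(Qz)$ and $u^*Nu$, and cancelling $u^*Su$ against the expansion of $\|W^{1/2}\sbvek zu\|^2$ --- is the same in both proofs.

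One wrinkle you should resolve explicitly: your difference-quotient computation correctly yields the constant $c=2$ for $H(z)=z^*Q^*Ez$ as written in Definition~\ref{d:mehrmann}, yet you then assert that $\Re[(Qz)^*(B-P)u]-(Qz)^*R(Qz)$ ``matches the derivative computed above,'' which is off by that factor of $2$. The identity \eqref{eq:energybalance} as stated holds for $H(z)=\tfrac12 z^*Q^*Ez$ (the convention the paper's own proof silently adopts, writing $H(z)=\tfrac12(\mathcal Pz)^*Q^*E\mathcal Pz$, and the one used in the conclusion). So either carry the $\tfrac12$ in $H$ from the start or state the balance with a factor $2$ on the right-hand side; as written, your final ``match'' claim is not literally true. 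This is an inconsistency inherited from the paper rather than a flaw in your method, but it should be fixed rather than papered over.
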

\begin{proof}
Let $\mathcal{P}$ denote the orthogonal projection onto $\ran E^*$, i.e., $I-\mathcal{P}$ maps onto $(\ran E^*)^\perp = \ker E$. Hence, have $E = E\mathcal{P} + E(I-\mathcal{P}) = E\mathcal{P}$. Let $E^\dagger$ denote the Moore-Penrose inverse of $E$. Then $\mathcal{P} = E^\dagger E$ and therefore
$$
Ez\in C^1([0,\infty),\K^n)
\quad\Llra\quad
\mathcal{P}z\in C^1([0,\infty),\K^n).
$$
Since $E^*Q=Q^*E$, we have $H(z) = \frac 12 z^*\calP E^*Qz = \frac 12 (\calP z)^*Q^*E(\calP z)$ and thus $H\circ z\in C^1([0,\infty);K^n)$. Consequently, and as $\Re(z^*Q^*JQz) = \Re(u^*Nu) = 0$, we obtain
\begin{align*}
\frac{\mathrm{d}}{\mathrm{d}t}(H\circ z)
&= \Re\left[\left(\frac{\mathrm{d}}{\mathrm{d}t}\calP z\right)^* Q^*E\calP z\right] = \Re\left[\left(\frac{\mathrm{d}}{\mathrm{d}t}\mathcal{P}z\right)^*E^*Qz\right] = \Re\left[\left(\frac{\mathrm{d}}{\mathrm{d}t}Ez\right)^*Qz\right]\\
&= \Re\big[(J-R)Qz + (B-P)u\big]^*Qz = -z^*Q^*RQz + \Re(u^*(B-P)^*Qz)\\
&= \Re\big[u^*\big((B+P)^*Qz + (S+N)u\big) - 2u^*P^*Qz - u^*Su\big] - z^*Q^*RQz\\
&= \Re\big[u^*y\big] - \begin{bmatrix}z^*&u^*\end{bmatrix}W \begin{bmatrix}z\\u\end{bmatrix},
\end{align*}
which is the claimed power balance.
\end{proof}

\section{Equivalence of the two formulations}
\label{sec:equivalence}
In this section, we associate a pH descriptor system in the sense of Definition~\ref{d:mehrmann} with a geometric pH system as defined in Definition~\ref{def:implicit_PH} and vice-versa. This shows that the two formulations introduced in Section~\ref{sec:defn} are equivalent.

\subsection{From geometric pH to descriptor pH}
The next theorem shows that geometric pH systems $(\calD,\calL,\calR)$ can be associated with particular pH descriptor systems such that solutions of the geometric pH system are uniquely determined parts of solutions of the descriptor system and vice versa.

\begin{thm}
\label{thm:diractovolker}
Let a geometric pH system $(\calD,\calL,\calR)$ be given as in Definition \ref{def:implicit_PH} and set $p = \dim\ker\calD + \dim\ker\calR+\dim\ker\calL$. Then there exists a pH descriptor system of the form
\begin{align}\label{e:mehrmann_in_thm}
\begin{bmatrix}\tfrac{d}{dt} E z(t)\\y(t)\end{bmatrix}
=
\begin{bmatrix}
	 J- R & B\\
	B^* & N
\end{bmatrix}
\bvek{z(t)}{u(t)}
\end{align}
as in \eqref{e:DAE} with $Q=I$, $P=0$, $S=0$ with the state $z \in\K^{n+r+p}$ such that the following hold:
\begin{enumerate}
\item[{\rm (i)}] If $(u,x,y)$ is a solution of $(\calD,\calL,\calR)$ then there exists $z$ such that $(u,z,y)$ solves \eqref{e:mehrmann_in_thm}.
\item[{\rm (ii)}] If $(u, z,y)$ is a solution of \eqref{e:mehrmann_in_thm}, then for every $(x_0,e_L(0))\in\calL$ there exists $x$ such that $(u,x,y)$ solves $(\calD,\calL,\calR)$ with $x(0)=x_0$. 
\end{enumerate}
Furthermore, if $-\calL$ is resistive, then $ E=E^*\geq 0$ holds.
\end{thm}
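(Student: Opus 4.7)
The plan is to invoke Proposition~\ref{p:extended} on each of $\calD$, $\calL$, $\calR$ to obtain extended representations with matrices $M_D, G_D$ (with $M_D = -M_D^*$), $M_L, G_L$ (with $M_L = M_L^*$), and $M_R, G_R$ (with $M_R = M_R^* \leq 0$), where $G_D, G_L, G_R$ are injective. Partition $M_D$ into a $3\times 3$ block matrix $(M_{ij})_{i,j=1}^3$ and $G_D = (G_{D,1}, G_{D,2}, G_{D,3})^\top$ with respect to $\K^{n+r+m} = \K^n \oplus \K^r \oplus \K^m$. Substituting these representations into the three geometric inclusions yields explicit equations: the Dirac dynamics $\dot x = -M_{11}e_L - M_{12}e_R + G_{D,1}\lambda_D - M_{13}u$, the algebraic equation $0 = -M_{21}e_L + (M_R - M_{22})e_R + G_{D,2}\lambda_D - G_R\lambda_R - M_{23}u$ obtained by matching the two expressions for $f_R$, the three algebraic constraints $G_L^* e_L = 0$, $G_{D,1}^* e_L + G_{D,2}^* e_R + G_{D,3}^* u = 0$, $G_R^* e_R = 0$, and the output equation $y = M_{31}e_L + M_{32}e_R + M_{33}u - G_{D,3}\lambda_D$.

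Take the DAE state $z = (e_L, e_R, \lambda_L, \lambda_D, \lambda_R) \in \K^{n+r+p}$. Define the Hermitian matrix $E$ whose only nonzero blocks are $E_{11} = M_L$, $E_{13} = -G_L$, $E_{31} = -G_L^*$, so that $(Ez)_1 = M_L e_L - G_L\lambda_L$ encodes the Lagrange variable $x$ and $(Ez)_3 = -G_L^* e_L$ vanishes on Lagrange-consistent states. Assemble $J-R$ row by row from the above equations, flipping signs on the Dirac constraint row so that the off-diagonal blocks $G_{D,i}$ and $-G_{D,i}^*$ pair up as required for $J = -J^*$. Take the third row of $J-R$ and $B_3$ to be zero; the Lagrange constraint is then preserved along the dynamics by $\frac{d}{dt}(Ez)_3 = -G_L^* \dot e_L = 0$, hence remains zero from consistent initial data. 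Setting $B_1 = -M_{13}$, $B_2 = -M_{23}$, $B_4 = -G_{D,3}^*$, $N = M_{33}$, $P = 0$, $S = 0$ completes the construction, and the pH structure is then verified block-by-block: $J$ is skew-Hermitian, the only nontrivial block of $R$ is $R_{22} = -M_R \geq 0$, and $W = \mathrm{diag}(R, 0) \geq 0$. Direction (i) is immediate --- substituting a geometric solution with its uniquely determined multipliers into $z$ reduces each DAE row to one of the identities above. For direction (ii), given a DAE solution $(u,z,y)$ and initial data $(x_0, e_L(0)) \in \calL$, define $x(t) := x_0 + \int_0^t \dot x(\tau)\,d\tau$ with $\dot x(\tau)$ read off from the right-hand side of row~1; the key computation is $x(t) - M_L e_L(t) = x_0 - M_L e_L(0) - G_L(\lambda_L(t) - \lambda_L(0)) \in \ran G_L$, which yields $(x(t), e_L(t)) \in \calL$ via the extended representation, and the remaining geometric equations follow from the other DAE rows together with the definition $f_R := M_R e_R - G_R\lambda_R$.

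The main obstacle I expect is the supplementary claim that $E = E^* \geq 0$ when $-\calL$ is resistive. Proposition~\ref{prop:ran_properties}(iv) applied to $-\calL$ yields $e_L^* M_L e_L \geq 0$ on $\ker G_L^*$ in the extended representation, and the non-uniqueness of $M_L$ in Proposition~\ref{p:extended} allows replacing it by $\Pi M_L \Pi$ (with $\Pi$ the orthogonal projection onto $\ker G_L^*$) so that $M_L \geq 0$ globally without altering $\calL$. The delicate point is that the off-diagonal $-G_L$ entries in the constructed $E$ obstruct global positive semidefiniteness via the Schur-complement criterion, since $\ran G_L$ and $\ran(\Pi M_L \Pi) = \ker G_L^*$ are orthogonal. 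Overcoming this will likely require either modifying the $(3,3)$-block of $E$ to include a compensating term, or rebuilding $E$ from the range representation $\calL = \ran\sbvek{P_L}{S_L}$ (with $P_L^* S_L \geq 0$ now following directly from $-\calL$ resistive) so that the nonzero content of $E$ concentrates in the single block $E_{11} = S_L^* P_L$, thereby avoiding the problematic off-diagonal entries while leaving the DAE equations equivalent on solutions.
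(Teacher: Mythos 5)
Your construction of the DAE for parts (i) and (ii) follows essentially the same route as the paper (three applications of Proposition~\ref{p:extended}, block assembly of $J$, $R$, $B$, $N$ from the extended representations), and those two parts are essentially sound: the differentiated Lagrange constraint $\tfrac{d}{dt}(G_L^*e_L)=0$ is rescued in direction (ii) by the hypothesis $(x_0,e_L(0))\in\calL$, which forces $G_L^*e_L(0)=0$. However, there is a genuine gap in the supplementary claim, and you have correctly diagnosed it without closing it: with your choice of state $z=(e_L,e_R,\lambda_L,\lambda_D,\lambda_R)$ and $E$ carrying the off-diagonal blocks $E_{13}=-G_L$, $E_{31}=-G_L^*$ against a zero $(3,3)$-block, the matrix $E$ is \emph{never} positive semidefinite unless $G_L=0$ (test vectors of the form $(x,0,ty,0,0)$ with $t\to\pm\infty$ force $G_L=0$). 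Neither of your proposed repairs is carried out: inserting a compensating $(3,3)$-block changes the Hamiltonian and destroys the identification $(Ez)_3=-G_L^*e_L$, and rebuilding $E$ from a range representation $\calL=\ran\sbvek{P_L}{S_L}$ would require premultiplying the dynamics by $S_L^*$ to get a Hermitian block $S_L^*P_L$, which is not obviously solution-preserving.

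The paper's resolution is a single structural trick you are missing: do not take $\lambda_L$ as a state component, but its derivative $\mu_L:=\dot\lambda_L$. Writing $x=Le_L-G_L\lambda_L$ gives $\dot x=\tfrac{d}{dt}(Le_L)-G_L\mu_L$, so the term $G_L\mu_L$ migrates to the right-hand side and is absorbed into the skew-adjoint part $J$ (the block $G_L$ in the first row is paired with $-G_L^*$ in a new row that reads $0=-G_L^*e_L$). This leaves $E=\operatorname{diag}(L,0,\dots,0)$, which is Hermitian and, when $-\calL$ is resistive so that $L=L^*\geq 0$ by Proposition~\ref{p:extended}, immediately gives $E=E^*\geq 0$. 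As a side benefit the Lagrange constraint $G_L^*e_L=0$ becomes an explicit algebraic row of the DAE rather than a differentiated (hidden) constraint, so no appeal to consistent initial data is needed for it. In direction (ii) one then recovers $\lambda_L(t)=\lambda_L^0+\int_0^t\mu_L(s)\,ds$ from the state component $\mu_L$, exactly paralleling the integration step you perform for $x$.
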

\begin{proof}
Let $d = \dim\ker\calD$, $k = \dim\ker\calR$, $l = \dim\ker\calL$, and $N = n+r+m$. By Proposition~\ref{p:extended}, there exist an injective $G\in\K^{N\times d}$ and a skew-adjoint $\wt J\in\K^{N\times N}$ such that
\begin{equation}
\calD = \left\lbrace \bvek{\wt Je - G\la}{e} : G^*e=0,\,\la\in\K^d,\,e\in\K^N\right\rbrace.
\end{equation}
Let $(u,x,y)$ be a solution of $(\calD,\calL,\calR)$ with $f_R$, $e_R$, and $e_L$ as in \eqref{e:solution_implicit}, i.e.,
\begin{equation}\label{e:in_dirac}
 \big(-\dot x(t),f_R(t),y(t),e_L(t),e_R(t),u(t)\big)\in\mathcal{D},\quad (x(t),e_L(t))\in\mathcal L,\quad  (f_R(t),e_R(t))\in\mathcal{R}.
\end{equation}
Hence, we find that there exists $\la : [0,\infty)\to\K^d$ such that
\begin{equation}\label{e: matrix_repr}
\begin{bmatrix}-\dot x(t)\\f_R(t)\\y(t)\\0\end{bmatrix}
= \bmat{\wt J}{-G}{G^*}{0} \begin{bmatrix}e_L(t)\\e_R(t)\\u(t) \\ \lambda(t) \end{bmatrix}
= \begin{bmatrix}
	J_{11} & J_{12} & J_{13} & -G_1 \\
	-J_{12}^* & J_{22} & J_{23} & -G_2\\
	-J_{13}^* & -J_{23}^* & J_{33} & -G_3 \\
	G_1^* & G_2^* & G_3^* & 0
\end{bmatrix}
\begin{bmatrix}e_L(t)\\e_R(t)\\u(t) \\ \lambda(t) \end{bmatrix}.
\end{equation}
Making use of Proposition \ref{p:extended} again, we find that the maximal resistive structure $\calR$ has a representation
\begin{align}
\label{eq:R_rep}
\calR = \left\{\bvek {- \wt Rx_R + G_R\lambda_R}{x_R} : G_R^*x_R=0,\,\lambda_R\in\K^k,\,x_R\in\K^r\right\},
\end{align}
where $G_R\in\K^{r\times k}$ is injective and $\wt R\in\K^{r\times r}$ is a positive semi-definite Hermitian matrix. Hence, \eqref{eq:R_rep} implies
\[
f_R(t) = -\wt R e_R(t) + G_R\lambda_R(t),\qquad G_R^*e_R(t)=0.
\]
Therefore, \eqref{e: matrix_repr} can be equivalently rewritten as
\[
\begin{bmatrix}\dot x(t)\\0\\y(t)\\0\\0\end{bmatrix}
= \begin{bmatrix}
	-J_{11} & -J_{12} & -J_{13} & G_1 &0\\
	J_{12}^* & -J_{22}- \wt R & -J_{23} & G_2 &G_R\\
	-J_{13}^* & -J_{23}^* & J_{33} & -G_3&0 \\
	-G_1^* & -G_2^* & -G_3^* & 0&0\\ 0&-G_R^*&0&0&0
\end{bmatrix} \begin{bmatrix}e_L(t)\\e_R(t)\\u(t) \\ \lambda(t) \\ \lambda_R(t) \end{bmatrix}
 \]
and after an additional permutation of the rows and columns we obtain
\begin{align}\label{eq:systemwithR}
\begin{bmatrix}\dot x(t)\\0\\0\\0\\y(t)\end{bmatrix}
= \begin{bmatrix}
	-J_{11} & -J_{12}  & G_1 &0& -J_{13}\\
	J_{12}^* & -J_{22}-\wt R & G_2 &G_R & -J_{23}\\
	-G_1^* & -G_2^*  & 0&0& -G_3^*\\ 0&-G_R^*&0&0&0\\-J_{13}^* & -J_{23}^*  & -G_3&0 & J_{33}
\end{bmatrix} \begin{bmatrix}e_L(t)\\e_R(t) \\ \lambda(t) \\ \lambda_R(t)\\ u(t) \end{bmatrix}.
 \end{align}
Leveraging Proposition~\ref{p:extended} one more time, we may express the Lagrange structure $\calL$ as
\begin{align}\label{eq:L_rep}
\calL = \left\{\bvek {Lx_L - G_L\lambda_L}{x_L} : G_L^*x_L=0,\,\lambda_L\in\K^l,\,x_L\in\K^n\right\},
\end{align}
where $G_L\in\K^{n\times l}$ is injective and $L\in\K^{n\times n}$ is a Hermitian matrix. Hence, for given $e_L(t)\in\K^n$ there exists a unique $\lambda_L(t)\in\K^l$ satisfying
\begin{align}
\label{eq:x_e_replaced}    
x(t)=Le_L(t)-G_L\lambda_L(t),\qquad G_L^*e_L=0
\end{align}
By Proposition~\ref{p:extended}, $Le_L(t)$ is orthogonal to $G_L\lambda_L(t)$ for all $t\geq 0$ and therefore $x\in C^1([0,\infty),\mathbb{K}^n)$ holds if and only if $Le_L,G_L\lambda_L\in C^1([0,\infty),\mathbb{K}^n)$ holds. Moreover, as $G_L$ is injective, we have $G_L^\dagger G_L = I_l$ and so $G_L\lambda_L\in C^1([0,\infty),\K^n)$ is equivalent to $\lambda_L\in C^1([0,\infty),\K^l)$. Using \eqref{eq:x_e_replaced}, the system \eqref{eq:systemwithR} is equivalent to
\begin{align*}
\begin{bmatrix}\tfrac{d}{dt}Le_L(t)-G_L\dot\lambda_L(t)\\0\\0\\0\\y(t)\end{bmatrix}
= \begin{bmatrix}
	-J_{11} & -J_{12}  & G_1 &0& -J_{13}\\
	J_{12}^* & -J_{22}- R & G_2 &G_R & -J_{23}\\
	-G_1^* & -G_2^*  & 0&0& -G_3^*\\ 0&-G_R^*&0&0&0\\-J_{13}^* & -J_{23}^*  & -G_3&0 & J_{33}
\end{bmatrix} \begin{bmatrix}e_L(t)\\e_R(t) \\ \lambda(t) \\ \lambda_R(t)\\ u(t) \end{bmatrix}
 \end{align*}
which can be rewritten as
\begin{align}
\label{eq:final_system}
\begin{bmatrix}\tfrac{d}{dt}Le_L(t)\\0\\0\\0\\0\\y(t)\end{bmatrix}
= \begin{bmatrix}
	-J_{11} & -J_{12}  & G_1 &0& G_L& -J_{13}\\
	J_{12}^* & -J_{22}- \wt R & G_2 &G_R &0 & -J_{23}\\
	-G_1^* & -G_2^*  & 0&0&0& -G_3^*\\ 0&-G_R^*&0&0&0&0\\-G_L^*&0&0&0&0&0 \\-J_{13}^* & -J_{23}^*  & -G_3&0 &0& J_{33}
\end{bmatrix} \begin{bmatrix}e_L(t)\\e_R(t) \\ \lambda(t) \\ \lambda_R(t)\\ \dot\lambda_L(t)\\ u(t) \end{bmatrix}.
\end{align}
We now define 
\begin{align}
\label{eq:def_dae_matrices}
\begin{split}
J&:=\begin{bmatrix}
	-J_{11} & -J_{12}  & G_1 &0& G_L\\
	J_{12}^* & -J_{22} & G_2 &G_R &0 \\
	-G_1^* & -G_2^*  & 0&0&0\\ 0&-G_R^*&0&0&0\\-G_L^*&0&0&0&0 
\end{bmatrix},
\quad R:=\begin{bmatrix}
	0 & 0  &0&0&0\\
	0 & \wt R & 0 &0 &0 \\
	0 & 0  & 0&0&0\\ 0&0&0&0&0\\0&0&0&0&0 
\end{bmatrix},
\quad E=\begin{bmatrix} 
    L&0&0&0&0\\0&0&0&0&0\\0&0&0&0&0\\0&0&0&0&0\\0&0&0&0&0
        \end{bmatrix},\\
B&:=\begin{bmatrix}
    -J_{13}\\ -J_{23}\\ -G_3^*\\0\\0
\end{bmatrix},\quad N=J_{33} ,\quad S=0,\quad Q=I,\quad P=0.
\end{split}
\end{align}
If $(u,x,y)$ solves $(\calD,\calL,\calR)$, then $Le_L\in C^1([0,\infty),\K^n)$ holds and $(u,z,y)$ with $z=(e_L,e_R,\lambda,\lambda_R,\dot\lambda_L)$ solves \eqref{eq:final_system}. This proves (i). To show (ii), let  $(u,z,y)$ with $z=(e_L,e_R,\lambda,\lambda_R,\mu_L)$ solve  \eqref{eq:final_system}. Then for $(x_0,e_L(0))\in\calL$ there exists unique $\lambda_L^0\in\K^l$ such that $x_0=Le_L(0)+G_L\lambda_L^0$ holds. We set $\lambda_L(t)=\lambda_L^0+\int_0^t \mu_L(s)\,ds$ and define $x(t)=Le_L(t)-G_L\lambda_L(t)$. Since $Le_L(0)$ is given, we have $x(0)=Le_L(0)+G_L\lambda_L^0 = x_0$. Furthermore,   $f_R(t)=-\wt Re_R(t)+G_R\lambda_R(t)$ fulfills $(f_R(t),e_R(t))\in\calR$ and consequently \eqref{e:solution_implicit} is satisfied. Therefore, $(u,x,y)$ is a solution of $(\calD,\calL,\calR)$ which proves (ii).

If $-\calL$ is resistive, then Proposition~\ref{p:extended} implies that $L$ as in \eqref{eq:L_rep} fulfills $L=L^*\geq 0$ and therefore $E$ given by \eqref{eq:def_dae_matrices} satisfies $E=E^*\geq 0$.
\end{proof}

\begin{rem}
Assume that for the Lagrange structure $\calL$ in Definition~\ref{def:implicit_PH} the subspace $-\calL$ is resistive. Theorem \ref{thm:diractovolker} yields $E\geq 0$. Therefore, the pencil $(E,J-R)$ is regular\footnote{I.e., $\lambda E-(J-R)$ is invertible for some $\lambda\in\mathbb{C}$}, if and only if $\ker E\cap\ker (J-R)=\{0\}$ holds, see e.g.\ \cite[Lemma 6.1.4]{BergerPhD}. Furthermore, it was shown in \cite{BergerPhD} that the following holds
\[
(sE-(J-R))^{-1}+(sE^*-(J-R)^*)^{-1}\geq 0\quad \text{for $\Re(s)> 0$.}
\]
Therefore the resulting descriptor system is positive real, i.e.\ that the transfer function $G(s)=B^*(sE-(J-R))^{-1}B+N$ fulfills
\[
G(s)+G(s)^*=B^*((sE-(J-R))^{-1}+(sE^*-(J-R)^*)^{-1})B\geq 0\quad \text{for $\Re(s)> 0$.}
\]
More details on the relation of positive real and pH descriptor systems, as well as their relation to passive descriptor systems can be found in \cite{CheGH22}.
\end{rem}

\begin{rem}
In the proof of Theorem \ref{thm:diractovolker}, 
we applied Proposition \ref{p:extended} to each of the subspaces
$\calD$, $\calL$, and $\calR$.
Since Proposition \ref{p:extended} holds for monotone subspaces as well, one might generalize and assume that $\calL$ is monotone. In this case, the corresponding pH descriptor system can be derived as in Theorem \ref{thm:diractovolker} and $E+E^*\geq 0$ holds. Then, the resulting matrix pair  $(E,J-R)$ is said to have positive Hermitian part. In~\cite{MehMW22}, the spectral properties as well as the regularity and the Kronecker canonical form of these pencils were further analysed.
\end{rem}

\subsection{From descriptor pH to geometric pH}
Next, we show how to associate a geometric pH system with a given pH descriptor system such that there is a one-to-one correspondence between the solutions of the two systems. The following result is a slight extension of \cite[Theorem 3]{MehrMora19} where no additional Lagrange structure was considered.

\begin{thm}\label{thm:desc_to_imp}
Let a pH descriptor system as in Definition \ref{d:mehrmann} be given with
\[
\ker E\cap\ker Q = \{0\}.
\]
Let $W = \sbmat RP{P^*}S$ as in \eqref{e:Wpositive}, $\Gamma := \sbmat JB{-B^*}{-N}$, set $r = n+m$, and define
$$
\calL := \ran\bvek EQ\subset\K^n\times\K^n,\qquad \calR := \gr(-W) = \ran\bvek{I_r}{-W}\subset\K^r\times\K^r.
$$
Further, with the matrices
$$
U := \bmattt{I_n}0000{I_r}0{I_m}0\in\K^{(n+r+m)\times (n+r+m)}
\qquad\text{and}\qquad \wt D := \bmat{-\Gamma}{-I_r}{I_r}0\in\K^{(n+r+m)\times (n+r+m)}
$$
define the subspace
$$
\calD := \gr^{-1}\!\big(U\wt DU^*\big).
$$
Then $(\calD,\calL,\calR)$ is a geometric pH system. Moreover, the following hold:
\begin{enumerate}
\item[{\rm (i)}] If $(u,z,y)$ solves the DAE \eqref{e:DAE}, then $(u,Ez,y)$ solves the geometric pH system $(\calD,\calL,\calR)$.
\item[{\rm (ii)}] If $(u,x,y)$ solves the geometric pH system $(\calD,\calL,\calR)$ with $f_R$, $e_R$, and $e_L$ as in Definition \ref{def:implicit_PH}, then $(u,z,y)$ solves the DAE \eqref{e:DAE}, where $z$ is the unique function satisfying $x=Ez$ and $e_L=Qz$.
\end{enumerate}
\end{thm}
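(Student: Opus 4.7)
The plan is to carry out three tasks in order: first verify that $\calL$, $\calR$ and $\calD$ have the claimed structural properties; second, unfold the somewhat abstract definition $\calD=\gr^{-1}(U\wt DU^*)$ into a concrete coordinate-wise dictionary; third, read off the two solution correspondences from that dictionary.

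For the structural part I would invoke Proposition~\ref{prop:ran_properties}. The assumption $E^*Q=Q^*E$ combined with $\ker E\cap\ker Q=\{0\}$ makes $\sbvek{E}{Q}$ injective and of rank $n$, so item (i) gives that $\calL$ is Lagrange; the hypothesis $W=W^*\geq 0$ directly yields through item (iv) that $\calR=\ran\sbvek{I_r}{-W}$ is maximal resistive. For $\calD$ the key observation is that the skew-adjointness of $J$ and $N$ implies $\Gamma=-\Gamma^*$, whence $\wt D=-\wt D^*$ and, since $UU^*=I$, also $U\wt DU^*=-(U\wt DU^*)^*$. The graph of a skew-adjoint matrix is a Dirac structure, and the inverse of a Dirac structure is Dirac, so $\calD=\gr^{-1}(U\wt DU^*)$ is Dirac.

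The next step is a direct unpacking of $\calD$. Writing elements of $\K^{n+r+m}$ in the block pattern $(n,r,m)$ and splitting the $\K^r$ components further as $\K^n\oplus\K^m$, the identity $\tilde e=U\wt DU^*\tilde f$ with $\tilde e=\bvekkk{-\dot x}{f_R}{y}$, $\tilde f=\bvekkk{e_L}{e_R}{u}$ and $e_R=\bvek{e_R^1}{e_R^2}$ collapses to
\begin{align*}
\dot x \,=\, Je_L + Bu + e_R^1,\qquad f_R \,=\, \bvek{e_L}{u},\qquad y \,=\, B^*e_L + Nu - e_R^2.
\end{align*}
Combining these with $(f_R,e_R)\in\calR$, i.e.\ $e_R=-W\sbvek{e_L}{u}$, and using the block decomposition of $W$ produces the system $\dot x=(J-R)e_L+(B-P)u$ and $y=(B+P)^*e_L+(S+N)u$.

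With this dictionary in place, both (i) and (ii) become short. For (i), given a DAE solution $(u,z,y)$, set $x=Ez$, $e_L=Qz$, $f_R=\sbvek{Qz}{u}$ and $e_R=-Wf_R$; the memberships $(x,e_L)\in\calL$ and $(f_R,e_R)\in\calR$ then hold by construction, and the DAE supplies the remaining identities. For (ii), $(x,e_L)\in\calL$ together with $\ker E\cap\ker Q=\{0\}$ yields a unique $z$ with $x=Ez$ and $e_L=Qz$; continuity of $z$ follows from the fact that $\sbvek{E}{Q}$ is injective and hence admits a left inverse, so $z$ inherits continuity from $Ez\in C^1$ and $Qz=e_L\in C$. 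Substituting $e_L=Qz$ into the dictionary recovers the DAE \eqref{e:DAE}. The main obstacle I expect is purely bookkeeping: the permutation $U$ re-orders the $(n,m,r)$ and $(n,r,m)$ block patterns, so unfolding $U\wt DU^*$ correctly requires care; once that is done, both solution transfers are formal substitutions.
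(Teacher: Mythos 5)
Your proposal is correct and follows essentially the same route as the paper: both arguments reduce to unpacking $U\wt D U^*$ blockwise and substituting $e_L=Qz$, $f_R=\sbvek{Qz}{u}$, $e_R=-Wf_R$ in one direction and inverting the injective map $\sbvek{E}{Q}$ in the other. Your explicit verification via Proposition~\ref{prop:ran_properties} that $\calL$ is Lagrange, $\calR$ is maximal resistive, and $\calD$ is Dirac (using $\Gamma=-\Gamma^*$ and that $U$ is unitary) is a welcome addition the paper leaves implicit.
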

\begin{proof}
Assume that $(u,z,y)$ solves \eqref{e:DAE}. Then, setting
\[
e_L := Qz,\quad f_R := \bvek{Qz}u,\quad e_R := -Wf_R,
\]
we have
\begin{align*}
	\bvekkk{-\dot x}y{f_R}
	&= \bvekkkk{-\frac d{dt}Ez}y{Qz}u = \bvekkkk{(-J+R)Qz + (-B+P)u}{(B+P)^*Qz + (S+N)u}{Qz}{u} = \bvek{(-\Gamma + W)f_R}{f_R} = \bvek{-\Gamma f_R - e_R}{f_R} = \wt D\bvekkk{Qz}u{e_R}.
\end{align*}
Now applying $U$ from the left to this equation shows that $(-\dot x,f_R,y,e_L,e_R,u)\in\calD$.

Conversely, if $(u,x,y)$ solves the geometric pH system $(\calD,\calL,\calR)$, then there exist functions $f_R,e_L,e_R$ such that $(-\dot x,f_R,y,e_L,e_R,u)\in\calD$, $(x,e_L)\in\calL$, and $(f_R,e_R)\in\calR$. By the definition of $\calL$ and $\calR$, there exists a unique function $z$ such that $x= Ez$ and $e_L = Qz$. Moreover, $e_R = -Wf_R$, and we obtain
$$
\bvekkk{-\frac d{dt}Ez}y{f_R} = \bvekkk{-\dot x}y{f_R} = \wt D\bvekkk{e_L}u{e_R} = \bmat{-\Gamma}{-I_r}{I_r}0\bvekkk{Qz}u{-Wf_R} = \bvek{-\Gamma\bvek{Qz}u + Wf_R}{\bvek{Qz}u}.
$$
This implies that $f_R = \sbvek{Qz}u$ and thus
$$
\bvek{-\frac d{dt}Ez}y = (W-\Gamma)\bvek{Qz}u,
$$
which means that $(u,z,y)$ solves \eqref{e:DAE}.
\end{proof}

\begin{rem}
For pH descriptor system as in Definition~\ref{d:mehrmann} satisfying  $\ker E\cap\ker Q=\{0\}$, we obtain a geometric pH system from Theorem~\ref{thm:desc_to_imp}. Applying Theorem \ref{thm:diractovolker} to this geometric system, leads back to a pH descriptor system as in Definition~\eqref{d:mehrmann} which fulfills $Q=Id$. Hence, we obtain a pH descriptor system with invertible $Q$, that is equivalent to the original descriptor system, but with a larger state space dimension, see also \cite{MehlMehrWojt20,MehU22} for alternative methods on achieving invertibility of $Q$. 
\end{rem}

\section{Conclusion, extensions and open problems}
\label{sec:conclusion}
We have shown that the geometric formulation of port-Hamiltonian systems is equivalent to the state-space representation by means of differential algebraic equations. To this end, we utilized tools from multilinear algebra and provided constructive proofs to transfer either of the formulation to the other. The main assumption to derive a geometric representation from a pH-DAE was that the matrices in the Hamiltonian $\calH(x)=\tfrac12x^*Q^*Ex$, $x\in\mathbb{K}^n$, satisfy $\ker E\cap\ker Q=\{0\}$. 

Concerning future research, a first extension could be the investigation of the case of a non-trivial kernel intersection. In this case, one cannot directly define a Lagrange subspace by means of these two matrices as it was done in the proof of Theorem~\ref{thm:desc_to_imp}. One possible remedy could be to isolate the common kernel by means of a common singular value decomposition of $E$ and $Q$, similarly as in  
\cite[Lemma 3.6]{ilchmann2023port}. Furthermore, the geometric pH representation obtained in Theorem~\ref{thm:desc_to_imp} could be  not optimal in the sense that the dimensions of constructed Dirac and the maximal resistive structure, which is equal to $2(n+m)$, might be further reduced.

In view of an extension to the infinite-dimensional case, a first step would be to define an infinite-dimensional differential algebraic formulation of pH systems. Whereas infinite-dimensional DAEs are a very delicate issue \cite{FavY98,reis2006systems,SviridyukFedorov2003,SeifertTW2022}, a definition for closed systems was given in~\cite{jacob2022solvability} by incorporating the pH structure, i.e., the dissipativity of the main operator. From a geometric point of view, infinite-dimensional pH systems give rise to (Stokes)-Dirac structures~\cite{van2002hamiltonian} including also boundary port variables. An analytical viewpoint on Dirac structures for skew-symmetric differential operators was provided in \cite{le2005dirac,villegas2007port}.

In the present note, we focused on continuous time pH systems. Recently, in \cite{CheGHM23} a definition for  discrete-time pH descriptor systems was given. Therein the discrete time pH system was obtained from a Cayley transformation of continuous time systems. Furthermore,  the Cayley transform was applied to the underlying Dirac structures which results in contractive subspaces as discrete-time counter parts of Dirac structures. It remains an open problem to compare the geometric pH formulation and the descriptor pH formulation for discrete-time systems.

\bibliographystyle{alpha}
\bibliography{sample}

\newcommand{\etalchar}[1]{$^{#1}$}
\begin{thebibliography}{CGHM23}

\bibitem[Ber13]{BergerPhD}
T.~Berger.
\newblock {\em On differential-algebraic control systems}.
\newblock PhD thesis, TU Ilmenau, 2013.

\bibitem[BHDS20]{BehrHdS20}
J.~Behrndt, S.~Hassi, and H.~De~Snoo.
\newblock {\em Boundary value problems, Weyl functions, and differential
  operators}.
\newblock Birkhäuser Cham, 2020.

\bibitem[BMXZ18]{BeaMXZ18}
C.A. Beattie, V.~Mehrmann, H.~Xu, and H.J. Zwart.
\newblock Linear port-{H}amiltonian descriptor systems.
\newblock {\em Math.\ Control Signals Systems}, 30:1--27, 2018.

\bibitem[BTW16]{BerWT16}
T.~Berger, C.~Trunk, and H.~Winkler.
\newblock Linear relations and the {K}ronecker canonical form.
\newblock {\em Linear Algebra Appl.}, 488:13--44, 2016.

\bibitem[CGH22]{CheGH22}
K.~Cherifi, H.~Gernandt, and D.~Hinsen.
\newblock The difference between port-{Hamiltonian}, passive and positive real
  descriptor systems, 2022.
\newblock preprint arXiv:2204.04990.

\bibitem[CGHM23]{CheGHM23}
K.~Cherifi, H.~Gernandt, D.~Hinsen, and V.~Mehrmann.
\newblock On discrete-time port-{H}amiltonian (descriptor) systems, 2023.
\newblock preprint arXiv:2301.06731.

\bibitem[DvdS98]{dalsmo1998representations}
M.~Dalsmo and A.J. van~der Schaft.
\newblock On representations and integrability of mathematical structures in
  energy-conserving physical systems.
\newblock {\em SIAM J. on Control Optim.}, 37(1):54--91, 1998.

\bibitem[FMP{\etalchar{+}}22]{FaulMasc22}
T.~Faulwasser, B.~Maschke, F.~Philipp, M.~Schaller, and K.~Worthmann.
\newblock Optimal control of port-{H}amiltonian descriptor systems with minimal
  energy supply.
\newblock {\em SIAM J.\ Control Optim.}, 60(4):2132--2158, 2022.

\bibitem[FY98]{FavY98}
A.~Favini and A.~Yagi.
\newblock {\em Degenerate differential equations in Banach spaces}.
\newblock CRC press, 1998.

\bibitem[GHR21]{GernHR21}
H.~Gernandt, F.E. Haller, and T.~Reis.
\newblock A linear relations approach to port-{H}amiltonian
  differential-algebraic equations.
\newblock {\em SIAM J. Matrix Anal. Appl.}, 42(2):1011--1044, 2021.

\bibitem[IKS23]{ilchmann2023port}
A.~Ilchmann, J.~Kirchhoff, and M.~Schaller.
\newblock Port-{H}amiltonian descriptor systems are relative generically
  controllable and stabilizable.
\newblock {\em preprint arXiv:2302.05156}, 2023.

\bibitem[JM22]{jacob2022solvability}
B.~Jacob and K.~Morris.
\newblock On solvability of dissipative partial differential-algebraic
  equations.
\newblock {\em IEEE Control Systems Letters}, 6:3188--3193, 2022.

\bibitem[JZ12]{JacoZwar12}
B.~Jacob and H.~Zwart.
\newblock {\em Linear Port-{H}amiltonian Systems on Infinite-dimensional
  Spaces}, volume 223 of {\em Operator Theory: Advances and Applications}.
\newblock Birkh\"{a}user Basel, 2012.

\bibitem[LGZM05]{le2005dirac}
Y.~Le~Gorrec, H.~Zwart, and B.~Maschke.
\newblock Dirac structures and boundary control systems associated with
  skew-symmetric differential operators.
\newblock {\em SIAM J.\ Control Optim.}, 44(5):1864, 2005.

\bibitem[MM19]{MehrMora19}
V.~Mehrmann and R.~Morandin.
\newblock Structure-preserving discretization for port-{H}amiltonian descriptor
  systems.
\newblock In {\em 2019 IEEE 58th Conference on Decision and Control (CDC)},
  pages 6863--6868, 2019.

\bibitem[MMW18]{MehlMehrWojt18}
C.~Mehl, V.~Mehrmann, and M.~Wojtylak.
\newblock Linear algebra properties of dissipative {H}amiltonian descriptor
  systems.
\newblock {\em SIAM J.\ Matrix Anal.\ Appl.}, 39:1489--1519, 2018.

\bibitem[MMW21]{MehlMehrWojt20}
C.~Mehl, V.~Mehrmann, and M.~Wojtylak.
\newblock Distance problems for dissipative {H}amiltonian systems and related
  matrix polynomials.
\newblock {\em Linear Algebra Appl.}, 623:335--366, 2021.

\bibitem[MMW22]{MehMW22}
C.~Mehl, V.~Mehrmann, and M.~Wojtylak.
\newblock Matrix pencils with coefficients that have positive semidefinite
  {H}ermitian parts.
\newblock {\em SIAM J. Matrix Anal. Appl.}, 43(3):1186--1212, 2022.

\bibitem[MU22]{MehU22}
V.~Mehrmann and B.~Unger.
\newblock Control of port-{H}amiltonian differential-algebraic systems and
  applications.
\newblock {\em preprint arXiv 2201.06590}, 2022.

\bibitem[MvdS23]{mehrmann2023differential}
V.~Mehrmann and A.J. van~der Schaft.
\newblock Differential--algebraic systems with dissipative {H}amiltonian
  structure.
\newblock {\em Math.\ Control Signals Systems}, pages 1--44, 2023.

\bibitem[PRS23]{PhilReis23}
F.~Philipp, T.~Reis, and M.~Schaller.
\newblock Infinite-dimensional port-{H}amiltonian systems - a system node
  approach, 2023.
\newblock preprint arXiv:2302.05168.

\bibitem[Rei06]{reis2006systems}
T.~Reis.
\newblock {\em Systems theoretic aspects of PDAEs and applications to
  electrical circuits}.
\newblock PhD thesis, University of Kaiserslautern, 2006.

\bibitem[SF03]{SviridyukFedorov2003}
G.A. Sviridyuk and V.E. Fedorov.
\newblock {\em Linear Sobolev Type Equations and Degenerate Semigroups of
  Operators}.
\newblock De Gruyter, Berlin, Boston, 2003.

\bibitem[Skr21]{skrepeklinear}
N.~Skrepek.
\newblock {\em Linear port-{H}amiltonian Systems on Multidimensional Spatial
  Domains}.
\newblock PhD thesis, University of Wuppertal, 2021.

\bibitem[STW22]{SeifertTW2022}
C.~Seifert, S.~Trostorff, and M.~Waurick.
\newblock {\em Evolutionary Equations: Picard's Theorem for Partial
  Differential Equations, and Applications}.
\newblock Springer Nature, 2022.

\bibitem[vdSJ14]{vdScJelt14}
A.J. van~der Schaft and D.~Jeltsema.
\newblock Port-{H}amiltonian systems theory: An introductory overview.
\newblock {\em Foundations and Trends\textsuperscript{\tiny\textregistered} in
  Systems and Control}, 1(2-3):173--378, 2014.

\bibitem[vdSM02]{van2002hamiltonian}
A.J. van~der Schaft and B.~Maschke.
\newblock Hamiltonian formulation of distributed-parameter systems with
  boundary energy flow.
\newblock {\em Journal of Geometry and physics}, 42(1-2):166--194, 2002.

\bibitem[vdSM18]{MascvdSc18}
A.~J. van~der Schaft and B.~Maschke.
\newblock Generalized port-{H}amiltonian {DAE} systems.
\newblock {\em Systems Control Lett.}, 121:31--37, 2018.

\bibitem[vdSM22]{van2022linear}
A.J. van~der Schaft and V.~Mehrmann.
\newblock Linear port-{H}amiltonian {DAE} systems revisited, 2022.
\newblock preprint arXiv:2211.06676.

\bibitem[Vil07]{villegas2007port}
J.A. Villegas.
\newblock {\em A port-{H}amiltonian approach to distributed parameter systems}.
\newblock PhD thesis, University of Twente, 2007.

\end{thebibliography}

\end{document}